\theoremstyle{plain}
\newtheorem{Thm}{Theorem}[section]
\newtheorem{Prop}[Thm]{Proposition}
\newtheorem{Cor}[Thm]{Corollary}
\theoremstyle{definition}
\theoremstyle{remark}
\newcommand{\cC}{{\mathcal C}}
\newcommand{\bbD}{\mathbb{D}}
\newcommand{\Q}{\mathbb{Q}}
\newcommand{\R}{\mathbb{R}}
\begin{document}
\title[Contractible Manifolds]{A Family of Mutually Nonhomeomorphic Separable Contractible 2-Manifolds of Cardinality $2^{2^{\aleph_0}}$}
\author{Bruce Blackadar}
\address{Department of Mathematics/0084 \\ University of Nevada, Reno \\ Reno, NV 89557, USA}
\email{bruceb@unr.edu}

\keywords{manifold, separable, metrizable}
\subjclass{Primary: 57N05; Secondary: 54D65, 57N99}
\date{\today}

\maketitle
\begin{abstract}
We describe a family of open subsets $M_A$ of the Moore plane, one for each subset $A$ of $\R$.  Each of these subsets
is a separable contractible (Hausdorff) 2-manifold, which is nonmetrizable if $A$ is uncountable.  The collection $\{M_A:A\subseteq\R\}$
contains $2^{2^{\aleph_0}}$ distinct homeomorphism classes.

\end{abstract}

\section{Introduction}

There is only one contractible metrizable (paracompact, second countable) 2-manifold, namely $\R^2$.  In contrast,
we will show that there are $2^{2^{\aleph_0}}$ mutually nonhomeomorphic separable contractible nonmetrizable 2-manifolds.
We also obtain a similar family of nonseparable 2-manifolds.

The term ``(topological) manifold'' does not have a universally standard definition in the literature.  We will use the term {\em manifold}
to mean a Hausdorff locally Euclidean topological space (without boundary), not necessarily metrizable.  All examples we
will consider are (path) connected.  Manifolds are locally compact, first countable, and locally contractible (hence locally path connected).

Nonmetrizable manifolds are often regarded as pathological objects, but their existence cannot be ignored.  There are
three classic constructions of nonmetrizable manifolds: the {\em long line}, the {\em Pr\"{u}fer plane}, and the {\em Moore plane}
(the last two are closely related constructions, but the resulting manifolds have quite different properties).  We will
use a variation of the Moore plane construction, which we describe in $\S 2$.

See \cite{NyikosTheory} for a rather comprehensive discussion of the theory of nonmetrizable manifolds.  In this reference, $2^{2^{\aleph_0}}$
mutually nonhomeomorphic connected 2-manifolds are constructed; however, these examples are not separable or contractible.

I am indebted to an anonymous referee for helpful comments on the proof of Proposition \ref{Prop2}.

\section{The Moore Plane}

In this section, we describe the construction and properties of the Moore plane, originally constructed by {\sc R.\ L.\ Moore} in 1942 \cite{MooreSeparability}.
{\bf Caution:}  In some references, the name ``Moore plane'' is used to mean the {\em Nemytskii plane}, which is a different space (not a 2-manifold),
a quotient of the Moore plane.

The {\em Moore plane} $M$ is the disjoint union of the open upper half plane $\R_+^2$ and one ray
$$R_a=\{(a,z):z\geq0\}$$
for each $a\in\R$.
The upper half plane and each ray have their usual topologies; $\R_+^2$ is open, each $R_a$ is closed,
and a sequence $(x_n,y_n)$ in $\R_+^2$ converges to $(a,z)\in R_a$ if and only if
$x_n\to a$ and
$$\lim_{n\to\infty}\frac{y_n}{|x_n-a|}=\frac{1}{z}$$
i.e.\ $(x_n,y_n)$ approaches $(a,0)\in\R^2$ asymptotically along lines through $(a,0)$ with slope $\pm\frac{1}{z}$ (a vertical line if $z=0$).
The point $(a,z)$ has basic neighborhoods consisting of a small interval in $R_a$ and one (if $z=0$) or two (if $z\neq0$) small open pieces
of pie in $\R_+^2$ with vertex $(a,0)$; such a neighborhood is homeomorphic to a disk in $\R^2$, so $M$ is locally Euclidean, and obviously Hausdorff.
$M$ is clearly connected.  The upper half-plane $\R_+^2$ is separable and dense in $M$, so $M$ is separable.  However, for any $A\subseteq\R$,
$$E_A=\cup_{a\in A} R_a$$
is closed in $M$.  In particular, $\{(a,0):a\in\R\}$ is an uncountable closed subset of $M$ which is discrete in the relative topology,
so $M$ is not metrizable.  In fact, it can be shown that $E_{\Q}$ and $E_{\R\setminus\Q}$ are disjoint closed sets in $M$ which do
not have disjoint neighborhoods, so $M$ is not normal.  See e.g.\ \cite{NyikosTheory} for a further discussion of the Moore plane.

\begin{Thm}\label{Thm1}
The Moore plane is contractible.
\end{Thm}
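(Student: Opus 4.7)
The plan is to construct an explicit contraction $H\colon M\times[0,1]\to M$ of the identity map to a constant map. I will build $H$ in stages, with the main work going into a homotopy from $\mathrm{id}_M$ to a continuous map $F\colon M\to M$ whose image lies in the contractible subspace $\R_+^2$; composing with the standard straight-line contraction of $\R_+^2$ to a basepoint $p_0=(0,1)$ then finishes the job.

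The target map $F$ I have in mind is $F(x,y)=(x,y+1)$ on $\R_+^2$ and $F([a,z])=(a,1)$ on each ray. One checks directly that $F$ is continuous: if $(x_n,y_n)\to[a,z]$ in $M$, then $x_n\to a$ and $y_n\to0$, so $F(x_n,y_n)=(x_n,y_n+1)\to(a,1)=F([a,z])$ in $\R_+^2\subseteq M$. Its image is $\{(x,y):y\ge1\}$, which is clearly contractible in $\R_+^2$.

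The difficulty, and the main obstacle, is producing a jointly continuous homotopy from $\mathrm{id}_M$ to $F$. The naive straight-line homotopy $H((x,y),t)=(x,y+t)$ on $\R_+^2$, combined with an attempted $H([a,z],t)=(a,t)$ on rays (forced by joint continuity with the $\R_+^2$ part), fails at $t=0$: for a sequence $(x_n,y_n)\to[a,z]$ the image $(x_n,y_n+t_n)$ with $t_n\to0^+$ approaches $(a,0)$ vertically and so converges in $M$ to $[a,0]$, not $[a,z]$. The root cause is that the slope condition $y_n/(x_n-a)\to1/z$ that encodes which ray-point the sequence hits is destroyed by a simple vertical translation in $\R_+^2$. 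Any homotopy on $\R_+^2$ whose restriction has approach-independent limits at boundary points $(a,0)$ is doomed for the same reason.

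The way I plan to get around this is to choose a first-stage homotopy whose restriction to $\R_+^2$ does have \emph{slope-dependent} limits at the points of $\R\times\{0\}$, so that rays carry to rays (and the $z$-parameter scales compatibly). The prototype is the vertical scaling $H_1((x,y),t)=(x,(1-t)y)$, for which one computes $H_1([a,z],t)=[a,z/(1-t)]$: continuous on each ray, and slope-compatible with the upper-half-plane piece. Using such a slope-respecting family, together with the fact that a path of the form $t\mapsto(a+zt,t)$ in $\R_+^2$ converges to $[a,z]$ in $M$ as $t\to0^+$ (since its slope from $(a,0)$ is identically $1/z$), I can define, for each $[a,z]$, an explicit continuous path $\gamma_{[a,z]}\colon[0,1]\to M$ starting at $[a,z]$, lifting immediately into $\R_+^2$ along the line of slope $1/z$ through $(a,0)$, and arriving at $(0,1)$, together with compatible paths $\gamma_{(x,y)}$ for $(x,y)\in\R_+^2$. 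Setting $H(p,t)=\gamma_p(t)$ gives the contraction.

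The step I expect to be most delicate is verifying joint continuity of $(p,t)\mapsto\gamma_p(t)$ at points $([a,z],t)$ of the rays: one must check that for every sequence $(x_n,y_n)\to[a,z]$ in $M$ the values $\gamma_{(x_n,y_n)}(t_n)$ converge to $\gamma_{[a,z]}(t)$ in $M$, which amounts to tracking both the Euclidean limit of the image points and the slope condition encoding the target ray. Once this is established, the straight-line contraction of the image in $\R_+^2$ to $(0,1)$ completes the homotopy and proves that $M$ is contractible.
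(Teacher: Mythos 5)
Your diagnosis of the obstruction is exactly right --- naive vertical translation destroys the slope data, and a slope-respecting family is needed --- but the specific repair you propose cannot work, and the failure is not confined to the joint-continuity check you defer to the end: it already occurs for each fixed time. Suppose $t>0$ is fixed and $G\colon M\to M$ is continuous with $G([a,z])=(a+zt,t)$ for all $a\in\R$, $z\geq0$, as your plan requires (each $\gamma_{[a,z]}$ leaves its ray immediately along the line of slope $1/z$). The first-coordinate projection $\pi\colon M\to\R$ is continuous, so $s(x,y)=\bigl(\pi(G(x,y))-x\bigr)/t$ is continuous on $\R_+^2$; since the path $y\mapsto(a+zy,y)$ converges to $[a,z]$ as $y\to0^+$, continuity of $G$ at $[a,z]$ forces $s(a+zy,y)\to z$ for every $a\in\R$ and every $z\geq0$. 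In particular $s(a,y)\to0$ for every $a$, so the closed sets $F_n=\{a\in[0,1]:|s(a,y)|\leq\tfrac14\ \mbox{for all}\ 0<y\leq\tfrac1n\}$ cover $[0,1]$, and by the Baire category theorem some $F_N$ contains an interval $[c,d]$ with $c<d$. But then $|s(c+y,y)|\leq\tfrac14$ for all $0<y\leq\min(\tfrac1N,d-c)$, contradicting the requirement $s(c+y,y)\to1$ (the case $a=c$, $z=1$). The same argument rules out any variant in which $\gamma_{[a,z]}(t)$ lies in $\R_+^2$ with first coordinate $a+\psi(z)$ for nonconstant $\psi$; and if the exit point is independent of $z$, say $\gamma_{[a,z]}(t)=(a,\sigma(t))$, then joint continuity fails at $t=0$, since $(a,\sigma(t_n))$ approaches $(a,0)$ vertically and hence converges in $M$ to $[a,0]\neq[a,z]$. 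The upshot: for $z>0$, a point $[a,z]$ can leave its ray continuously within a homotopy only by first sliding down the ray to the base point and exiting vertically.

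That is exactly what the paper's contraction does, and it is the idea your plan is missing. The paper keeps each ray inside itself during the main stage, $h(t,(a,z))=(a,z\sqrt{t})$, paired with the half-plane stretch $h(t,(x,y))=(x,\sqrt{2y+t}-\sqrt{t})$ for small $y$, whose derivative at $y=0$ is $1/\sqrt{t}$; a sequence approaching $[a,z]$ with slope ratio $1/z$ is thus carried to one with ratio $1/(z\sqrt{t})$, i.e.\ to a sequence converging to the ray point $[a,z\sqrt{t}]$. The boundary limits stay \emph{on the rays} for every $t>0$ --- precisely what defeats the Baire obstruction above --- and the rays collapse to their base points only at $t=0$, where the stretch has infinite derivative. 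After this stage, the space $Y$ (half-plane plus base points) has a topology at the boundary demanding vertical approach, so the subsequent vertical lift into $\{y\geq1\}$ is continuous, and the rest is routine. Note also that your prototype $H_1((x,y),t)=(x,(1-t)y)$, while correctly slope-respecting, moves ray points the wrong way: $[a,z]\mapsto[a,z/(1-t)]$ runs off to infinity as $t\to1$, whereas the paper's family has stretching factor $1/\sqrt{t}\to\infty$ as $t\to0$, contracting ray points down their rays. Your outer frame (homotope the identity to a map into a contractible subset, e.g.\ your map $F$, which is indeed continuous) is fine, but the bridge must route the rays through their base points, which leads you back to the paper's construction.
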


\begin{proof}
We will define an explicit contraction.  First we contract $M$ to the subspace $Y$ consisting of $\R_+^2$ and $\{(a,0):a\in\R\}$.
This contraction will contract each $R_a$ in the usual way.  To make the contraction continuous, vertical lines in $\R_+^2$ must
be stretched near the $x$-axis in such a way that curves through $(a,0)$ have their derivatives at 0 multiplied by a suitable factor.
One formula for such a contraction is as follows, for $0\leq t\leq 1$:
$$h(t,(x,y))=\left \{ \begin{array}{ccc} (x,\sqrt{2y+t}-\sqrt{t}) & \mbox{if} & {0<y\leq 2(1-\sqrt{t})} \\ (x,y) & \mbox{if} & {y>2(1-\sqrt{t})} \end{array} \right .$$
$$h(t,(a,z))=(a,z\sqrt{t})\ .$$
To see that $h$ is continuous, note that the derivative of $\phi(y)=\sqrt{2y+t}-\sqrt{t}$ at 0 is $\frac{1}{\sqrt{t}}$; thus if $t_n\to t$
and $(x_n,y_n)\to(a,z)$, i.e.\ $x_n\to a$ and $\frac{y_n}{|x_n-a|}\to\frac{1}{z}$, and $(u_n,v_n)=h(t_n,(x_n,y_n))$, then $u_n\to a$ (in fact $u_n=x_n$)
and $\frac{v_n}{|u_n-a|}\to \frac{1}{z\sqrt{t}}$, so $h(t_n,(x_n,y_n))\to h(t,(a,z))$.  We have that $h(1,\cdot)$ is the identity on $M$
and $h(0,\cdot)$ maps $M$ onto $Y$.

It is now easy to contract $Y$ along vertical lines to the half-plane
$$H=\{(x,y):y\geq1\}$$
since $Y$ can be identified as a set with the closed
upper half-plane, although the topology at the $x$-axis is stronger than the usual topology.  Then contract the half-plane $H$ to a point.
\end{proof}

\section{A Family of Subspaces}

Let $A$ be an arbitrary subset of $\R$.  Let $M_A=M\setminus E_{A^c}$ be the subset of $M$ consisting of $\R_+^2$ and all $R_a$ for $a\in A$.
Then $M_A$ is an open subset of $M$, hence a 2-manifold.

\begin{Cor}
Each $M_A$ is separable and contractible.
\end{Cor}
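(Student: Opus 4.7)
The plan is to inherit both properties from the corresponding properties of the full Moore plane $M$ proved in $\S 2$, by checking that the constructions there respect the subspace $M_A$.

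First I would show separability. The countable set $\Q^2\cap\R_+^2$ is dense in $\R_+^2$ in the usual topology. Any basic neighborhood of a point $(a,z)\in R_a\subseteq M_A$ (with $a\in A$) contains a small open ``pie slice'' in $\R_+^2$ with vertex $(a,0)$, and hence meets this rational set. Therefore $\Q^2\cap\R_+^2$ is dense in $M_A$, proving separability.

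For contractibility, I would verify that the explicit contraction $h$ constructed in the proof of Theorem \ref{Thm1} restricts to a contraction of $M_A$. The key point to check is that $h([0,1]\times M_A)\subseteq M_A$: for $(x,y)\in\R_+^2$, the formula preserves the $x$-coordinate, and the new $y$-coordinate $\sqrt{2y+t}-\sqrt{t}$ (or $y$) is strictly positive since $y>0$, so the image lies in $\R_+^2\subseteq M_A$; for $(a,z)\in R_a$ with $a\in A$, $h(t,(a,z))=(a,z\sqrt{t})$ stays in $R_a\subseteq M_A$. Restricting $h$ thus gives a continuous homotopy from the identity on $M_A$ to a map onto $Y_A:=\R_+^2\cup\{(a,0):a\in A\}$. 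Continuity of the restriction is automatic, since $M_A$ carries the subspace topology.

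Finally, the remaining stages of the contraction in Theorem \ref{Thm1} — contracting $Y_A$ along vertical lines into the half-plane $H=\{(x,y):y\geq 1\}\subseteq\R_+^2\subseteq M_A$, and then contracting $H$ to a point — take place entirely within $\R_+^2\cup\{(a,0):a\in A\}\subseteq M_A$ for exactly the same reason, so concatenating all three homotopies gives a contraction of $M_A$ to a point. There is no real obstacle; the argument is essentially bookkeeping to confirm that each stage of the contraction in Theorem \ref{Thm1} preserves $M_A$, and this works because the formulas fix first coordinates on rays and keep interior points of $\R_+^2$ inside $\R_+^2$.
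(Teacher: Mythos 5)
Your proof is correct and follows the same route as the paper: separability from the dense separable subset $\R_+^2$, and contractibility by checking that the contraction from Theorem \ref{Thm1} maps $[0,1]\times M_A$ into $M_A$ and hence restricts. You simply spell out the bookkeeping (rays with $a\in A$ stay in $R_a$, interior points stay in $\R_+^2$) that the paper leaves implicit.
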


\begin{proof}
Since $M_A$ contains $\R_+^2$, it is separable.  And the contraction defined in the proof of Theorem \ref{Thm1}
maps $[0,1]\times M_A$ into $M_A$ for all $A$, hence restricts to a contraction of $M_A$.
\end{proof}

The proof of the next proposition is a standard argument apparently originally due to {\sc Kuratowski} (cf.\ \cite[\S24,VI]{KuratowskiTopology}).

\begin{Prop}
There are $2^{2^{\aleph_0}}$ distinct homeomorphism classes among the $M_A$.
\end{Prop}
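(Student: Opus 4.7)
The plan is a Kuratowski-style cardinality argument. Since $\R$ has $2^{2^{\aleph_0}}$ subsets, the number of homeomorphism classes among the $M_A$ is trivially at most $2^{2^{\aleph_0}}$. To establish the matching lower bound, I would show that each homeomorphism class $\{B\subseteq\R:M_B\cong M_A\}$ has cardinality at most $2^{\aleph_0}$; then, since $2^{\aleph_0}<2^{2^{\aleph_0}}$ by Cantor's theorem, partitioning a set of size $2^{2^{\aleph_0}}$ into blocks of size at most $2^{\aleph_0}$ must produce $2^{2^{\aleph_0}}$ blocks.

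The key step is to bound the number of continuous maps $M_A\to M$. I would first fix a countable dense subset $D\subseteq M_A$, for instance $D=\Q^2\cap\R_+^2$, which lies in every $M_A$ and is dense in $M$ by the description of the topology in Section 2 (every basic neighborhood of any point of any $R_a$ contains an open pie slice in $\R_+^2$, hence a rational point). Since $M$ is Hausdorff and $M_A$ is first countable (as noted in the introduction, all manifolds in our sense are), any continuous map $f:M_A\to M$ is determined by its restriction to $D$: every point of $M_A$ is a sequential limit of points of $D$, and the continuous image of such a sequence has a unique limit in the Hausdorff target $M$.

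Consequently, the total number of continuous maps $M_A\to M$ is at most $|M|^{|D|}=(2^{\aleph_0})^{\aleph_0}=2^{\aleph_0}$. In particular, the family of homeomorphisms $M_A\to M_B$ as $B$ ranges over subsets of $\R$ has cardinality at most $2^{\aleph_0}$. Since each such homeomorphism $f$ determines $B$ uniquely from its image (either as $M_B=f(M_A)$, or explicitly as $B=\{a\in\R:R_a\subseteq f(M_A)\}$), the number of $B\subseteq\R$ with $M_B\cong M_A$ is at most $2^{\aleph_0}$, as required.

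I do not anticipate a substantive obstacle: the real work is front-loaded into Section 2, which records the needed local structure of $M$ (first countability, Hausdorffness, and the pie-slice description of neighborhoods). The only point demanding a touch of care is the sequential-determination step, which must simultaneously invoke first countability of the domain and Hausdorffness of the target; once these are in hand, the remainder is pure cardinal bookkeeping.
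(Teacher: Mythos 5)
Your proposal is correct and follows essentially the same route as the paper: the Kuratowski-style argument that a continuous map from the separable space $M_A$ into the Hausdorff space $M$ is determined by its values on a countable dense subset, giving at most $(2^{\aleph_0})^{\aleph_0}=2^{\aleph_0}$ continuous maps and hence at most $2^{\aleph_0}$ sets $B$ with $M_B\cong M_A$. Your additions (the explicit choice $D=\Q^2\cap\R_+^2$, the sequential-limit justification via first countability, and the recovery of $B$ from the image of a homeomorphism) merely fill in details the paper leaves implicit, so the two arguments coincide in substance.
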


\begin{proof}
Fix $A\subseteq\R$.  If $B\subseteq\R$, then a homeomorphism from $M_A$ to $M_B$ can be regarded as a continuous function from
$M_A$ to $M$.  But $M_A$ is separable and each continuous function from $M_A$ to $M$ is completely determined by its values
on a countable dense subset of $M_A$.  It is easily checked that the cardinality of $M$ is $2^{\aleph_0}$ (in fact the cardinality
of any connected manifold is $2^{\aleph_0}$), so there are only $(2^{\aleph_0})^{\aleph_0}=2^{\aleph_0}$ continuous functions
from $M_A$ to $M$.  Thus there are at most $2^{\aleph_0}$ subsets $B$ of $\R$ for which $M_B$ is homeomorphic to $M_A$.
Since there are $2^{2^{\aleph_0}}$ subsets of $\R$, there are $2^{2^{\aleph_0}}$ distinct homeomorphism classes.
\end{proof}

The $M_A$ are not all topologically distinct.  If $A$ is countable, it is easy to see that $M_A\cong\R^2$, and conversely.  It seems likely that
if the symmetric difference $A\Delta B$ is countable, then $M_A\cong M_B$; the converse does not hold, since if $A$ and $B$ are subsets of $\R$
which are conjugate by a $\cC^2$-diffeomorphism of $\R$, it is clear that $M_A\cong M_B$ (this is unclear if $A$ and $B$ are just conjugate by a homeomorphism,
since the vertical lines have to be scaled to preserve the asymptotic direction of approach to the points on the rays; even a $\cC^1$-diffeomorphism
might not be good enough for technical reasons).

\section{Extensions and Variations}

We can extend the class of examples of the last section.  Let $N$ be the space obtained from the open unit disk $\bbD$ by adding rays at
each point of the boundary circle in the same manner as in the construction of $M$.  Then $N$ is a separable nonmetrizable 2-manifold,
and it can be shown by an argument similar to the one
in the proof of Theorem \ref{Thm1} that $N$ is contractible.  If $A$ is any subset of the unit circle, an open submanifold $N_A$ can
be formed by adding rays only at the points of $A$.  Then $M\cong N_I$, where $I$ is a proper open interval in the unit circle;
more generally, if $B\subseteq\R$, then $M_B\cong N_A$ for the corresponding subset $A$ of $I$.  So $N$ is the ``largest'' member of this
extended family.

We can also do a similar construction in the Pr\"{u}fer plane $P$, which consists of adding to the open upper half plane a copy
of the closed lower half plane at each point of the $x$-axis.  This $P$ is a 2-manifold which is contractible \cite[Appendix A, Problem 6]{SpivakDifferential}
but not separable.  If $A\subseteq\R$, we can form $P_A$ by adding the lower half-planes only at points of $A$.  Each of these is
contractible in the same way.  $P_A$ is separable if and only if $A$ is countable, and in this case is homeomorphic to $\R^2$.

\begin{Prop}\label{Prop2}
There are $2^{2^{\aleph_0}}$ distinct homeomorphism classes among the $P_A$.
\end{Prop}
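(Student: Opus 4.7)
The plan is to imitate the Kuratowski-style counting argument from the proof of Proposition~3.2 (the Moore-plane case). Fixing $A\subseteq\R$, the aim is to show that at most $2^{\aleph_0}$ subsets $B\subseteq\R$ satisfy $P_B\cong P_A$; combined with the fact that $\R$ has $2^{2^{\aleph_0}}$ subsets, this forces $2^{2^{\aleph_0}}$ homeomorphism classes.

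The new complication is that $P_A$ is no longer separable once $A$ is uncountable, so continuous functions $P_A\to P$ are not determined by their values on a countable set. My plan is to transfer the counting to a separable, topologically invariant subspace. The natural candidate is the closure $X_A:=\overline{\R_+^2}\subseteq P_A$; writing $L_a$ for the closed lower half-plane attached at $a\in A$ and $\partial L_a$ for its boundary line inside $P_A$, one has $X_A=\R_+^2\cup\bigcup_{a\in A}\partial L_a$. A countable dense subset of $\R_+^2$ is automatically dense in $X_A$, so $X_A$ is separable and of cardinality $2^{\aleph_0}$; in fact $X_A$ is a $2$-manifold with boundary whose interior is $\R_+^2\cong\R^2$ and whose boundary components are the individual lines $\partial L_a$, each homeomorphic to $\R$.

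The main obstacle is a topological characterization of $X_A$ inside $P_A$, ensuring that every homeomorphism $\phi:P_A\to P_B$ carries $X_A$ onto $X_B$. Equivalently one must characterize $P_A\setminus X_A=\bigsqcup_{a\in A}L_a^\circ$ as a canonical topological subset. Locally $P_A$ looks like $\R^2$ everywhere, so the characterization must be global; a plausible candidate is that the sets $L_a^\circ$ are exactly the maximal connected open $U\subseteq P_A$ such that $U\cong\R^2$, the closure $\overline{U}$ is homeomorphic to a closed half-plane, and $U$ is a connected component of $P_A\setminus\partial U$. Verifying this, in particular excluding imposters such as half-strips $\R_+^2\cap\{x<c\}$ that sit inside $\R_+^2$, is the technical heart of the argument, and is presumably where the input of the referee acknowledged in the introduction is used.

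Once the invariance of $X_A$ is in place, the rest of the argument mirrors that of the Moore-plane proposition. The restriction $\phi|_{X_A}:X_A\to X_B\subseteq P$ is a continuous function from a separable space into a space of cardinality $2^{\aleph_0}$, and such functions number at most $(2^{\aleph_0})^{\aleph_0}=2^{\aleph_0}$. Moreover, as a homeomorphism of $2$-manifolds with boundary, $\phi|_{X_A}$ must carry each boundary component $\partial L_a$ bijectively onto some $\partial L_{\sigma(a)}\subseteq X_B$, so the map $\sigma:A\to B$, and hence $B=\sigma(A)$, is recovered from $\phi|_{X_A}$ alone. Thus at most $2^{\aleph_0}$ subsets $B$ give $P_B\cong P_A$, and the proposition follows.
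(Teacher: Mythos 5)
Your counting skeleton is sound, but the lemma everything hinges on --- that every homeomorphism $\phi:P_A\to P_B$ carries $X_A=\overline{\R_+^2}$ onto $X_B$ --- is not merely left unverified; it is false, and your proposed characterization of the flap interiors fails with it. The trouble is that for each $a\in A$ the set $\R_+^2\cup L_a$ is an \emph{open} subset of $P_A$ homeomorphic to $\R^2$ (via the Pr\"ufer chart sending $(x,y)\in\R_+^2$ to $((x-a)/y,\,y)$ and fixing $L_a$), and in this chart $\partial L_a$ is an ordinary straight line, in no way topologically distinguished. Take a homeomorphism of this copy of $\R^2$ supported on a compact disk straddling that line; its support is compact, hence closed, in $P_A$, so extending by the identity gives a self-homeomorphism $h$ of $P_A$ that moves points of $\partial L_a\subseteq X_A$ into $L_a^\circ$. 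Thus $h(X_A)\neq X_A$, and $X_A$ is not a topological invariant of $P_A$. The same local picture defeats your maximality criterion: let $V$ be $L_a^\circ$ together with an open half-disk attached across a segment of $\partial L_a$. Then $V$ is open, connected, homeomorphic to $\R^2$, its closure $\overline V$ is homeomorphic to a closed half-plane, and $V$ is a component of $P_A\setminus\partial V$ (it is relatively clopen there); since $V$ properly contains $L_a^\circ$, the sets $L_a^\circ$ are not maximal with the properties you list, so they cannot be characterized that way, and the recovery of $B$ from $\phi|_{X_A}$ via ``boundary components of $X_B$'' collapses.

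The paper's proof sidesteps invariance entirely, and that is the idea your proposal is missing: it never asks what $\phi$ does to any canonical subspace of the \emph{target}, only what happens to the canonical upper half-plane of the \emph{domain}. For any homeomorphism $\phi:P_A\to P_B$, the image $\phi(\R_+^2)$ satisfies the countable chain condition, hence meets only countably many of the pairwise disjoint open sets $L_b^\circ$, hence lies in $P_C$ for some countable $C$; there are $2^{\aleph_0}$ such $C$, and by the Kuratowski argument (separable domain, Hausdorff target of cardinality $2^{\aleph_0}$) only $2^{\aleph_0}$ continuous maps $\R_+^2\to P_C$ for each, so at most $2^{\aleph_0}$ restrictions $\phi|_{\R_+^2}$ can occur, which bounds by $2^{\aleph_0}$ the number of $B$ with $P_B\cong P_A$. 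Note also that your detour through $X_A$ would buy nothing even if it worked: since $\R_+^2$ is dense in $X_A$ and $P$ is Hausdorff, $\phi|_{X_A}$ is already determined by $\phi|_{\R_+^2}$, so the real content in either approach is tying $B$ to this restriction --- which the paper accomplishes with the ccc/$P_C$ step rather than with any (unavailable) invariance statement.
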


\begin{proof}
Fix $A\subseteq\R$.  If $B\subseteq\R$, then a homeomorphism from $P_A$ to $P_B$ can be regarded as a continuous function from
$P_A$ to $P$.  The image of the upper half plane under any such map can intersect only countably many of the lower half planes
since the upper half plane has the countable chain condition.  Thus the image of the upper half plane is contained in $P_C$
for some countable subset $C$ of $\R$.  But there are $2^{\aleph_0}$ countable subsets of $\R$, and only $2^{\aleph_0}$
continuous maps from the upper half plane to each such $P_C$ by the Kuratowski argument, so there can only be $2^{\aleph_0}$
homeomorphisms from $P_A$ to some $P_B$.
\end{proof}

As with the Moore plane, we can expand the class of examples by adding to $\bbD$ a copy of the closed lower half plane at each boundary
point (in the picture of $N$ described in \cite{NyikosTheory}, this corresponds to adding a wedge instead of a ray at the boundary point)
to obtain a 2-manifold $Q$, which is again contractible.  If $B$ is any subset of the circle, we can just add wedges at points of $B$.
Then $P\cong Q_I$, where $I$ is a proper open subinterval of the circle; more generally, each $P_A$ is homeomorphic to a $Q_B$.

We can put all the examples together into one family.  If $A$ and $B$ are disjoint subsets of the circle, to $\bbD$ we can add rays at
the points of $A$ and wedges at the points of $B$ to obtain a 2-manifold $D_{A,B}$.  We have $N_A=D_{A,\emptyset}$ and $Q_B=D_{\emptyset,B}$.
The stretching of $\bbD$ near the boundary needed to contract a wedge (i.e.\ the stretching of the upper half plane needed to
contract $P$) can be taken to be the same as the stretching needed for a ray (i.e.\ the stretching of the upper half plane
described in the proof of Theorem \ref{Thm1}); thus $D_{A,B}$ is contractible.  $D_{A,B}$ is separable if and only if
$B$ is countable.  The homeomorphism classes of the $D_{A,B}$ include $2^{2^{\aleph_0}}$ distinct homeomorphism classes
different from all the $N_A$ and $Q_B$.  There is no ``largest'' manifold in this family.

\bibliography{mancon2ref}
\bibliographystyle{alpha}

\end{document}